\newcommand\makebig[2]{%
  \@xp\newcommand\@xp*\csname#1\endcsname{\bBigg@{#2}}%
  \@xp\newcommand\@xp*\csname#1l\endcsname{\@xp\mathopen\csname#1\endcsname}%
  \@xp\newcommand\@xp*\csname#1r\endcsname{\@xp\mathclose\csname#1\endcsname}%
}
\setlist[enumerate]{label=\normalfont{(\roman*)}}
\newcommand\cyr
\renewcommand\rmdefault{wncyr}
\renewcommand\sfdefault{wncyss}
\renewcommand\encodingdefault{OT2}
\DeclareTextFontCommand{\textcyr}{\cyr}
\newcommand\congcat{\stackrel{\mathclap{\tiny\mbox{homeo/diff}}}{\quad\cong\quad}}
\definecolor{darkgreen}{rgb}{0,.5,0}
\newtheorem{theorem}{Theorem}[section]
\newtheorem{lemma}[theorem]{Lemma}
\newtheorem{proposition}[theorem]{Proposition}
\newtheorem{conjecture}[theorem]{Conjecture}
\theoremstyle{definition}
\newtheorem{definition}[theorem]{Definition}
\newtheorem{question}[theorem]{Question}
\newtheorem{remark}[theorem]{Remark}
\let\int\relax
\newcommand{\int}{\mathring}
\newcommand{\id}{\text{id}}
\title[Knotted handlebodies]{Knotted handlebodies in the 4-sphere and 5-ball}
    \author[Mark Hughes]{Mark Hughes}
    \address{Brigham Young University\\Provo, UT, 84602 USA}
    \email{hughes@mathematics.byu.edu}
    \author[Seungwon Kim]{Seungwon Kim}
\address{Sungkyunkwan University, Suwon, Gyeonggi-do, 16419 Republic of Korea}
\email{seungwon.kim@skku.edu}
    \author[Maggie Miller]{Maggie Miller}
    \address{Stanford University\\Stanford, CA, 94305 USA}
    \email{maggie.miller.math@gmail.com}
 \subjclass{57K99 (primary); 57K45 (secondary)}
\thanks{MH was supported by a grant from the NSF (LEAPS-MPS-2213295). SK was supported by the Institute for Basic Science (IBS-R003-D1) and a National Research Foundation of Korea (NRF) grant funded by the Korea government (MSIT) (NRF-2022R1C1C2004559). MM was supported by a Clay Research Fellowship and a Stanford Science Fellowship.}
\let\oldtocsection=\tocsection
\let\oldtocsubsection=\tocsubsection
\let\oldtocsubsubsection=\tocsubsubsection
\renewcommand{\tocsection}[1]{\hspace{0em}\oldtocsection{#1}}
\renewcommand{\tocsubsection}[2]{\hspace{1em}\oldtocsubsection{#1}{#2}}
\renewcommand{\tocsubsubsection}[2]{\hspace{2em}\oldtocsubsubsection{#1}{#2}}
\begin{document}

\maketitle

\begin{abstract}
For every integer $g\ge 2$ we construct 3--dimensional genus--$g$ 1--handlebodies smoothly embedded in $S^4$ with the same boundary, and which are defined by the same cut systems of their boundary, yet which are not isotopic rel.\ boundary via any locally flat isotopy even when their interiors are pushed into $B^5$. This proves a conjecture of Budney--Gabai for genus at least 2.

\end{abstract}

\section{Introduction}\label{sec:intro}

In this paper, we work in both the smooth and topological locally flat categories. We will specify in which category various statements hold. As a shorthand, we will sometimes write ``topological", but implicitly mean ``topological and locally flat." 

The goal of this paper is to obstruct isotopies rel.\ boundary between two boundary-parallel handlebodies (by which we always mean 3-dimensional 1--handlebodies) that are properly embedded in $B^5$ and are homeomorphic rel.\ boundary as 3-manifolds.

\begin{definition}
Let $H_1$ and $H_2$ be genus--$g$ handlebodies that are both bounded by the same surface $F$. We say that $H_1$ and $H_2$ are {\emph{compressing curve equivalent}} if there exist $g$ disjoint simple closed curves $A_1,\ldots,A_g$ in $F$ such that $F\setminus\nu(A_i)$ is planar, and each $A_i$ bounds disks in both $H_1$ and $H_2$.
\end{definition}

If $H_1$ and $H_2$ are handlebodies properly embedded in $B^5$ with common boundary which are homeomorphic rel.\ boundary as 3-manifolds, then they are compressing curve equivalent.

Our motivation is the following conjecture of Budney and Gabai:

\begin{conjecture}[{\cite[Conjecture 11.3]{gabaibudney}}]\label{conjecture}
For each $g\ge 0$ there exist 3--dimensional genus--$g$ handlebodies $H_1,H_2\subset S^4$ such that $\partial H_1=\partial H_2$ and $H_1,H_2$ are compressing curve equivalent, but $H_1$ is not isotopic to $H_2$ via an isotopy that fixes $\partial H_i$.
\end{conjecture}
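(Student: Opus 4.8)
The plan is to build $H_2$ from the standard boundary-parallel genus-$g$ handlebody $H_1$ by a modification supported in a single $4$-ball, and then to detect the difference with the fundamental group of the complement --- an invariant that is unaffected by weakening ``smooth'' to ``locally flat'' and that is not killed by pushing interiors into $B^5$.

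\emph{Construction.} Fix the standard genus-$g$ handlebody $H_1\subset S^4$ with boundary $F$, together with a standard complete cut system $A_1,\dots,A_g\subset F$ bounding meridian disks $D_1,\dots,D_g\subset H_1$, so that $F\setminus\nu(A_i)$ is planar. Pick a $4$-ball $B\subset S^4$ meeting $H_1$ in two disjoint unknotted ``bridges'' $D^2\times I$ belonging to two distinct $1$-handles of $H_1$ (this is where $g\ge 2$ enters), with $B$ disjoint from $F$ and from every $D_i$. Inside $B$, replace these two trivial bridges by a pair of re-embedded bridges with the same boundary data but \emph{linked} with one another in a prescribed, homologically trivial but $\pi_1$-nontrivial way (e.g.\ a Bing-double or Borromean-type linking of the two $3$-balls, or a Budney--Gabai-style barbell implanted so as to join the two handles). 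Set $H_2=(H_1\setminus(\text{old bridges}))\cup(\text{new bridges})$. Since the change is confined to $B$ and $B\cap H_1$ is exactly the two bridges, $H_2$ is again an embedded genus-$g$ handlebody, homeomorphic rel $\partial$ to $H_1$, with $\partial H_2=F$ and the same cut system $A_1,\dots,A_g$; hence $H_1$ and $H_2$ are compressing curve equivalent by construction.

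\emph{Obstruction.} Because $H_i\simeq\bigvee^g S^1$, Alexander duality gives $H_1(S^4\setminus\nu(H_i))=0$, so $\pi_1$ of either complement is automatically \emph{perfect}; for the standard $H_1$ it is trivial. Thus it suffices to show $G:=\pi_1(S^4\setminus\nu(H_2))$ is a nontrivial perfect group. Van Kampen applied to $S^4\setminus\nu(H_2)=(S^4\setminus\nu(H_1)\setminus\mathring B)\cup(B\setminus\nu(\text{bridges}))$ collapses the computation to $G\cong\pi_1(B\setminus\nu(\text{linked bridges}))$, since the first piece is simply connected and the gluing region (a punctured $S^3$) is simply connected. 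I would read off a Wirtinger-type presentation of $G$ from the linking pattern and detect nontriviality via a surjection onto a suitable finite perfect group. Finally, if $H_1$ and $H_2$ were isotopic rel $\partial$ --- even locally flatly, even after pushing interiors into $B^5$ --- the complements would be homeomorphic rel $\partial\nu(F)$; a general-position argument shows $\pi_1(B^5\setminus\nu(H_2))$ is a quotient of $G$, and one checks the chosen finite quotient still survives, so it stays nontrivial while $\pi_1(B^5\setminus\nu(H_1))=1$.

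The crux is the interplay of the three constraints on the linking gadget: it must be homologically trivial (so no $\mathbb{Z}$-summand appears and $G$ can be perfect yet nontrivial), it must genuinely force $G\ne 1$ and retain a finite detecting quotient after the push-in to $B^5$, and it must sit inside a ball disjoint from a complete cut system so that compressing curve equivalence really holds. Verifying that the explicit --- but intricate --- complement group does not collapse, and that the finite quotient persists in dimension five, is where essentially all of the work lies; routing the gadget through two handles and the tubular-neighborhood bookkeeping are comparatively routine.
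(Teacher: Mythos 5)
Your proposed obstruction is identically zero, so the approach cannot work. For \emph{any} 3--dimensional handlebody $H$ embedded in $S^4$, the complement $S^4\setminus\nu(H)$ is simply connected, no matter how $H$ is embedded. Indeed, a regular neighborhood $N$ of $H$ is also a regular neighborhood of its spine, a wedge of $g$ circles, which has codimension $3$; hence $\pi_1(S^4\setminus N)=1$. Moreover $N\cong H\times[-1,1]$ with $H=H\times\{0\}$, and writing $S^4\setminus H$ as the union of $U=S^4\setminus(H\times[-1/2,1/2])$ and $V=H\times\bigl((-1,0)\cup(0,1)\bigr)$, each component of $U\cap V$ includes into the corresponding component of $V$ by a homotopy equivalence (all are homotopy equivalent to $H$), so van Kampen gives $\pi_1(S^4\setminus H)\cong\pi_1(U)=1$. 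The same argument (spine of codimension $4$) shows $\pi_1(B^5\setminus\nu(H))=1$ for handlebodies pushed into $B^5$. So there is no nontrivial perfect group, no finite quotient, and nothing for your Wirtinger computation to detect: your own van Kampen reduction, which identifies the global group with $\pi_1(B\setminus\nu(\text{new bridges}))$, only confirms that the local group is forced to be trivial as well. (Two smaller problems: Alexander duality already forces $H_1$ of the complement to vanish, so ``homological triviality of the gadget'' was never the relevant constraint; and a $4$-ball $B$ meeting $H_1$ in bridges of $1$-handles cannot be disjoint from $F=\partial H_1$, since the lateral annulus of each bridge lies in $F$, and the re-embedding must fix that annulus or else $\partial H_2\neq\partial H_1$. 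But these are secondary to the fatal point.)

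This is exactly why the paper's argument is global and never looks at complements. There, $H_1$ and $H_2$ arise as the two halves of a smooth $3$-sphere $L\subset S^5$ obtained by doubling a slice ball for the $5$-twist spun trefoil (built via Satoh's tubing and Lemma~\ref{lem:ball}); Ruberman's double-slice obstruction together with Wall--Shaneson shows $L$ is not topologically unknotted, while Waldhausen's theorem, the Rokhlin form, and Hirose's theorem show the common cross-sectional surface $U$ is unknotted with standard symplectic basis, so $U$ also bounds a standard pair $H_1^*,H_2^*$ with $H_1^*\cup_U H_2^*$ unknotted. Hence at least one pair $(H_i,H_i^*)$ is not topologically isotopic rel boundary in its $5$-ball, and Lemma~\ref{lem:boundaryparallel} places everything in $S^4$ with interiors pushed into $B^5$; adding trivial tubes handles all $g\ge 2$. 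Any successful construction of the kind you envision must be detected by invariants strictly subtler than the homotopy (in particular fundamental) groups of complements --- this is the same phenomenon that forces Budney--Gabai's barbell-implantation examples to be detected by refined embedding-calculus invariants rather than by $\pi_1$.
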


Budney and Gabai \cite{gabaibudney} provided examples satisfying Conjecture \ref{conjecture} for $g=0$, obstructing smooth isotopy rel.\ boundary. We prove a stronger version of this conjecture for $g\ge 2$.

\begin{theorem}\label{mainthm}There exist smooth genus--2 compressing-curve equivalent handlebodies $H_1$ and $H_2$ embedded in $S^4$ with $\partial H_1=\partial H_2$, such that if $H_1,H_2$ are boundary-summed with identical collections of $(g-2)$ smooth solid tori to obtain smooth genus--$g\ge2$ handlebodies $\widehat{H}_1,\widehat{H}_2$, the handlebodies $\widehat{H}_1$ and $\widehat{H}_2$ are not topologically isotopic rel.\ boundary even when their interiors are pushed into $B^5$.
\end{theorem}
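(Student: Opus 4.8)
The plan is to detect the handlebodies by the fundamental group of their complement in $B^5$, an invariant that is insensitive both to passing to the locally flat category and to pushing interiors into $B^5$ --- in contrast to the smooth, $4$--dimensional techniques available when $g=0$. For a handlebody $H$ properly and locally flatly embedded in $B^5$ with $\partial H=F\subset S^4$, set $G(H)=\pi_1(B^5\setminus\nu(H))$, recorded together with a meridian $\mu$ and the peripheral data coming from the inclusion $\partial\nu(H)\hookrightarrow B^5\setminus\nu(H)$. A van Kampen argument shows $G(H)$ is normally generated by $\mu$, and Alexander duality gives $H_1(B^5\setminus\nu(H))\cong\mathbb{Z}$ generated by $\mu$, so $G(H)$ behaves like a higher--dimensional knot group. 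By isotopy extension, any locally flat isotopy of $H$ rel $\partial$ inside $B^5$ is realized by an ambient homeomorphism of $B^5$ that fixes $S^4$ pointwise and carries $\nu(H_1)$ onto $\nu(H_2)$; hence it induces an isomorphism $G(H_1)\cong G(H_2)$ carrying meridian to meridian and respecting the peripheral data. It thus suffices to produce compressing--curve equivalent $H_1,H_2\subset S^4$ with $\partial H_1=\partial H_2$ for which $(G(H_1),\mu_1)\not\cong(G(H_2),\mu_2)$, and to check that this persists under boundary summing with solid tori.

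For the construction I would take $H_1$ to be the standardly embedded genus--$2$ handlebody in $S^4$, so that $G(H_1)\cong\mathbb{Z}$, and present it as a collar of $F=\partial H_1$ with the meridian disks $D_1,D_2$ of its two $1$--handles attached along the cut system $A_i=\partial D_i$ and a $3$--handle filling the rest. I would obtain $H_2$ from $H_1$ by a modification supported in a ball disjoint from $D_1\cup D_2$ and from a collar of $F$ --- for instance, replacing a subball of $H_1$ by a different, ``knotted'' $3$--ball with the same boundary sphere, in the spirit of the genus--$0$ construction of Budney--Gabai --- parametrized by a loop $\gamma$ in the complement of the remainder of $H_1$. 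Since the modification fixes both $F$ and the disks $D_1,D_2$, the handlebodies $H_1$ and $H_2$ have the same boundary and are compressing--curve equivalent (each $F\setminus\nu(A_i)$ is planar), but $\gamma$ is engineered so that $[\gamma]$ is nontrivial in $G(H_2)$. The modification and the verification of all of the theorem's hypotheses would be recorded in an explicit handle diagram or movie.

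The heart of the argument is then to prove $G(H_2)\not\cong\mathbb{Z}$. From the handle diagram I would extract a Wirtinger--type presentation of $G(H_2)$ and certify that $[\gamma]$ survives the relations imposed by $\partial H_2=F$ and by the cut system --- for instance by computing a nontrivial Alexander module of the meridional $\mathbb{Z}$--cover, or by producing a surjection of $G(H_2)$ onto a fixed noncyclic finite or metabelian group sending $\mu$ to a nontrivial element, which $G(H_1)\cong\mathbb{Z}$ cannot admit. I expect this to be the main obstacle: the modification must be drastic enough to change the complement group while being gentle enough to disturb neither $F$ nor the cut system, and then one must actually certify the change. This is also where the genus--$2$ hypothesis should enter essentially --- one needs at least two handles for the relevant move and enough room in the handlebody/surface group to see $[\gamma]$ after the peripheral relations are imposed, whereas for $g\le 1$ the available data is too coarse --- which is precisely why the conclusion here (topological, and stable in $B^5$) is available for $g\ge 2$ but the $g=0$ phenomena of Budney--Gabai require smooth, $4$--dimensional invariants that die after either move.

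Finally, to reach $g\ge 3$: because the boundary sum with a collection of solid tori is performed in a ball away from the support of the modification, $B^5\setminus\nu(\widehat{H}_i)$ is obtained from $B^5$ by deleting $\nu(H_i)$, neighbourhoods of the (fixed) solid tori, and a trivial connecting tube, so van Kampen gives $G(\widehat{H}_i)\cong G(H_i)\ast K$ with $K=\pi_1$ of the complement of the solid tori, the same for $i=1,2$. By Grushko's theorem the rank of a free product is additive, so $G(\widehat{H}_1)\cong\mathbb{Z}\ast K$ and $G(\widehat{H}_2)\cong G(H_2)\ast K$ have different ranks as soon as $G(H_2)$ is noncyclic --- which holds, since $G(H_2)$ is infinite with $H_1\cong\mathbb{Z}$ and $G(H_2)\not\cong\mathbb{Z}$ --- for any identical collection of $g-2$ solid tori. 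Combined with the first paragraph, this shows $\widehat{H}_1$ and $\widehat{H}_2$ are not topologically isotopic rel $\partial$ inside $B^5$, which is the theorem.
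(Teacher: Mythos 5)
Your proposal is a strategy outline rather than a proof, and the step it defers is the entire content of the theorem. You reduce everything to producing a handlebody $H_2\subset S^4$ with the same boundary and the same cut system as the standard $H_1$, but with $\pi_1(B^5\setminus\nu(H_2))\not\cong\mathbb{Z}$ after the interior is pushed into $B^5$ --- and you explicitly leave that construction and its certification open (``I expect this to be the main obstacle''). Worse, the mechanism you suggest --- a modification supported in a small $4$--ball $D$, replacing the trivial $3$--ball $H_1\cap D$ by a knotted $3$--ball with the same boundary, ``in the spirit of Budney--Gabai'' --- is exactly the kind of local re-knotting this theorem is designed to go beyond: by Hartman's theorem (cited in the introduction), $3$--balls in $S^4$ with common boundary become isotopic rel.\ boundary once pushed into $B^5$, and the Budney--Gabai genus--$0$ examples do become isotopic there; so there is no reason to expect a local ball swap to change $\pi_1$ of the $B^5$--complement, and good reason to expect it does not. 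Note also that, splitting $B^5$ along the level just below the pushed-in copy, one gets $\pi_1(B^5\setminus\nu(H))\cong\pi_1(S^4\setminus F)$ modulo the normal closure of the image of $\pi_1(S^4\setminus H)$; you exhibit no example where this quotient is non-cyclic, nor any argument that one exists. There are further errors: $G(H)$ is normally generated by the image of the peripheral $\pi_1(\partial\nu(H))$, not by the meridian alone, since $H$ is not simply connected; and the stabilization step is wrong as stated, because splitting $B^5$ along a $4$--ball that meets $\widehat{H}_i$ in the cocore disk of the connecting tube gives an amalgamated product $G(H_i)\ast_{\mathbb{Z}}K$ over meridians, not a free product, so Grushko does not apply directly (and the theorem allows the $g-2$ solid tori to be an arbitrary identical collection, possibly knotted).

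For comparison, the paper never computes any invariant of the handlebody complements. It takes $K$ to be the $5$--twist spun trefoil, which is not smoothly doubly slice (Theorem \ref{ruberman}); by Proposition \ref{satoh} and Lemma \ref{lem:ball}, $K$ bounds a $3$--ball in $B^5$ with a single index--$1$/index--$2$ pair, whose double is a knotted $3$--sphere $L\subset S^5$. After reordering critical points, a middle level $S^4$ meets $L$ in an unknotted genus--$2$ surface $U$, splitting $L=H_1\cup_U H_2$ with each half smoothly boundary-parallel in its $5$--ball (Lemma \ref{lem:boundaryparallel}). Waldhausen's theorem, the Rokhlin form computation, and Hirose's theorem (Theorem \ref{thm:hirose}) identify $(U;(A_i,B_i))$ with the standard picture, producing standard handlebodies $H_1^*,H_2^*$ whose union is unknotted; if both $H_i$ were topologically isotopic rel.\ boundary to $H_i^*$, then $L$ would be topologically, hence by Wall--Shaneson smoothly, unknotted, contradicting Ruberman. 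The topological conclusion is thus extracted from a smooth Rokhlin-type obstruction plus the unknotting theorem for $3$--spheres in $S^5$, and the non-isotopic pair is obtained (at least in the theorem's proof) non-constructively as one of $(H_1,H_1^*)$ or $(H_2,H_2^*)$. Your plan, by contrast, would need a fundamentally new construction making $\pi_1$ of the $B^5$--complement non-abelianly visible, and nothing in the proposal supplies it.
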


In particular, in Theorem~\ref{mainthm}, boundary-summing $g-2$ solid tori to $H_1$ and $H_2$ yields a pair of genus-$g$ handlebodies satisfying Conjecture \ref{conjecture}.

In contrast, the 3-balls constructed by Budney--Gabai become smoothly isotopic rel.\ boundary when their interiors are pushed into $B^5$. This isotopy can be seen explicitly once one understands their construction, since Budney and Gabai construct their 3-balls explicitly.  In fact, any two 3-balls embedded in $S^4$ with the same boundary become isotopic rel.\ boundary when their interiors are pushed into $B^5$, as proved by Hartman \cite{hartman}. (This statement can be made in either the smooth or topological category.) This holds for pairs of $(n-1)$-dimensional balls embedded in $S^{n}$ for all $n\ge 3$; for disks in $S^3$ this follows easily from the Schoenflies theorem and in higher dimensions it follows from the unknotting conjecture.

\begin{proof}[Proof that $(n-1)$-balls in $S^n$ become isotopic in $B^{n+1}$ for $n\ge 4$]\leavevmode

Let $B_1$ and $B_2$ be $(n-1)$-balls embedded in $S^n$ with the same boundary. View $S^n$ as an equator of $S^{n+1}$, so that $S^n$ cuts $S^{n+1}$ into two balls $W$ and $W'$. Push the interior of $B_2$ slightly into $W$ so that $B_1\cup B_2$ is an embedded codimension-2 sphere inside $S^{n+1}=W\cup W'$.

The complement $S^{n+1}\setminus (B_1\cup B_2)$ is homotopy equivalent to a circle, so $B_1\cup B_2$ bounds an $n$-ball $V$ inside $S^{n+1}$ (by \cite{stallings} in the topological category; additionally \cite{levine} in the smooth category for $n>4$ or \cite[Corollary 3.1]{wall} and \cite[Theorem 2.1]{shaneson} in the smooth category for $n=4$).  If $V\subset W$, then $B_1$ (with interior pushed into $W$) is isotopic rel.\ boundary to $B_2$ in $W\cong B^{n+1}$ and we are done.

Suppose the interior of the ball $V$ intersects $W'$. Let $B_1\times I$ be a thickening of $B_1$ in $S^{n+1}$, so that \begin{itemize}
    \item $B_1$ is identified with $B_1\times\{1/2\}$,
    \item $B_1\times[0,1/2]\subset W'$ and $B_1\times [1/2,1]\subset W$,
    \item $\partial B_1\times[1/2,1]\subset B_2$.
    \end{itemize}
    Since $B_1$ is a ball, we can isotope $V$ rel.\ boundary so that $V\cap\nu(B_1)=B_1\times[1/2,1]\subset W$.

Note $\partial V\cap W'=B_1\subset\partial W'$. Then $W'$ is (homeo/diffeo)morphic to $B^{n-1}\times I\times I$ with $B_1=B^{n-1}\times \{1/2\}\times\{0\}$. (Please note that this parameterization is unrelated to the previous thickening of $B_1$.) Here, $B^{n-1}\times I\times\{0\}$ lies in $\partial W'$. Up to reparametrization, we have $\mathring{V}\cap W'$ contained in $B^{n-1}\times I\times[1/2,1]$, so we may isotope the interior of $V$ outside of $W'$ by isotopy along the second $I$ coordinate extended to be supported in a small neighborhood of $B^{n-1}\times I\times[1/2,1]\subset S^{n+1}$). Now $V$ is a ball cobounded by $B_1, B_2$ that lies completely within $W$.
\end{proof}

Our construction necessarily yields handlebodies of genus at least two. There is thus an obvious open question left about solid tori.

\begin{question}\label{question} Do there exist solid tori in $S^4$ with the same boundary that are compressing curve equivalent but are not isotopic rel.\ boundary? Do they necessarily become isotopic rel.\ boundary when their interiors are pushed into $B^5$?
\end{question}

Answering the first part of question \ref{question} positively would affirm Conjecture \ref{conjecture}. In a preprint of this paper, we also asked whether any two 3-balls in $S^4$ with the same boundary become isotopic rel.\ boundary when their interiors are pushed into $B^5$; this (as mentioned above) was answered positively by Hartman \cite{hartman}.

\subsection*{Acknowledgements} Thanks to Mark Powell for correcting our discussion on topological vs.\ smooth double slicing and to two anonymous referees for carefully reading the paper and providing many helpful comments.

\section{Double Slicing}

Our obstruction to isotopy rel.\ boundary comes from double sliceness (or more precisely, obstructing double sliceness) of 2-knots.

\begin{definition}
A {\emph{$2$--knot}} $K$ is the image of a smooth embedding from $S^{2}$ to $S^{4}$. 
We say that $K$ is (topologically/smoothly) {\emph{unknotted}} if $K$ is the boundary of the image of a (topological/smooth) embedding of $B^{3}$ in $S^{4}$.
\end{definition}

More generally, a positive-genus surface in $S^4$ is said to be (topologically/smoothly) unknotted if it bounds an embedded handlebody in $S^4$ in the appropriate category.

It is a theorem of Kervaire \cite{kervaire} that every (topological/smooth) 2--knot is {\emph{slice}}, in the sense that it bounds a (topological/smooth) 3--ball in $B^5$. However, Stoltzfus \cite{stoltzfus} showed that not every 2--knot is topologically {\emph{doubly slice}}.

\begin{definition}
Let $K$ be a $2$--knot. We say that $K$ is (topologically/smoothly) {\emph{doubly slice}} if, writing $S^{5}$ as the union of two $5$--balls along their boundary $W\cong S^4$, 
there exists a (topological/smooth) embedding $f:B^4\to S^5$ such that \[(W,W\cap f(\partial B^4))\congcat(S^4,K).\]

In words, $K$ is doubly slice when $K$ is an equator of an unknotted 3--sphere in $S^5$ in the appropriate category.
\end{definition}

Ruberman \cite{ruberman} gave convenient examples of 2--knots that are not doubly slice (using different techniques than Stoltzfus, who actually obstructed algebraic double sliceness, a related property that is implied by double sliceness).

\begin{theorem}[\cite{ruberman}]\label{ruberman}
The 5-twist spun trefoil is not smoothly doubly slice.
\end{theorem}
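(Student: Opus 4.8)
The plan is to deduce Theorem~\ref{ruberman} from Ruberman's twisting construction, following the strategy of \cite{ruberman}: show that if the $5$-twist spun trefoil $K$ were smoothly doubly slice, one could produce a smooth embedding of $S^3$ in $S^5$ with $K$ as an equatorial cross-section, and then derive a contradiction from the properties of the $\pi$-orbifold group (equivalently, from Casson--Gordon-type invariants of the double branched cover of the trefoil). The key input is that the $r$-twist spin of a knot $\kappa$ is fibered with fiber the once-punctured $r$-fold cyclic branched cover of $\kappa$ and monodromy the canonical branched-cover deck transformation; for the trefoil, the $5$-fold cyclic branched cover is a spherical space form, and the associated monodromy has order $5$.

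First I would recall the \emph{algebraic} obstruction. If $K$ is doubly slice, cutting $S^5$ along the unknotted $S^3$ that has $K$ as equator expresses $S^4$ as a union of two slice disk complements for $K$, and the infinite cyclic cover of $S^4\setminus K$ splits as a corresponding union; this forces the Alexander module to decompose as $P\oplus \overline{P}$ for a Lagrangian-type summand $P$, and it forces the Farber--Levine/Blanchfield pairing to vanish on a Lagrangian. For a twist-spun knot these invariants are computed from the cyclic branched cover of the base knot, so I would translate the double-slice hypothesis into a statement about a metabelian or Casson--Gordon invariant of the trefoil's $5$-fold branched cover.

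Next I would invoke the refined geometric argument that distinguishes the smooth category: Ruberman uses the fact that the $5$-twist spun trefoil bounds a canonical smooth $3$-ball coming from the branched-cover mapping torus, and analyzes the possible complementary pieces of a hypothetical double-slicing. The contradiction comes from computing a signature-type or $\rho$-invariant (a Casson--Gordon invariant associated to the order-$5$ symmetry) that would have to vanish if a Lagrangian splitting existed, but is shown to be nonzero for the trefoil by a direct character-sum computation on the lens-space-like $5$-fold branched cover. This nonvanishing is exactly where the arithmetic of the trefoil (as opposed to, say, a knot whose branched cover has no suitable characters) enters.

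The main obstacle — and the part I would expect to be most delicate — is the passage from the purely algebraic double-slice obstruction (which detects only \emph{algebraic} double sliceness, à la Stoltzfus, and which the $5$-twist spun trefoil might conceivably pass) to a genuine obstruction that survives in the \emph{smooth} (indeed even topological) category: one must carefully set up the Casson--Gordon invariant for the branched symmetry, verify its metabelian-cobordism invariance, and check that a hypothetical smooth double-slicing really does produce the vanishing-on-a-Lagrangian condition it must violate. Since this is precisely the content of \cite{ruberman}, I would cite that paper for the computation and merely sketch the reduction; the role of Theorem~\ref{ruberman} in what follows is only as a black box, namely a specific $2$-knot that is provably not smoothly doubly slice.
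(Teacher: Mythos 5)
The paper does not actually prove this statement; like you, it cites Ruberman as a black box. What it \emph{does} do is explain the nature of Ruberman's obstruction, and that explanation reveals a genuine error in your sketch.

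The paper states that Ruberman's invariant ``is shown to be well-defined using Rokhlin's theorem applied to a smooth, spin 4-manifold cobounded by a smooth 3-manifold in $B^5$.'' That is, the obstruction is a Rokhlin $\mu$-invariant computation, and Rokhlin's theorem is an intrinsically \emph{smooth} phenomenon --- it fails for topological spin $4$-manifolds (the $E_8$ manifold is the standard counterexample). Your sketch instead proposes a Casson--Gordon / $\rho$-invariant / Blanchfield--Farber--Levine argument, and you explicitly claim that the resulting obstruction would ``survive in the \emph{smooth} (indeed even topological) category.'' That claim is not a delicate technical point to be checked; it is false, and the paper tells you why in the very next paragraph: Hillman showed that the $5$-twist spun trefoil \emph{is} topologically doubly slice. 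Any obstruction of the type you describe (homology-cobordism or metabelian signature invariants, which are insensitive to the smooth/topological distinction in this dimension) would therefore have to vanish, and so cannot be what Ruberman uses. Your proposal would prove too much.

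There is a second, more technical symptom of the same problem: the $5$-fold cyclic branched cover of $S^3$ over the trefoil is the Poincar\'e homology sphere, which has trivial $H_1$. The Casson--Gordon machinery you invoke is built from nontrivial characters on $H_1$ of a branched cover, so there is simply nothing to feed into the ``character-sum computation'' you gesture at. By contrast, the Poincar\'e sphere's Rokhlin invariant is $8 \pmod{16}$, which is exactly the kind of nonvanishing smooth-category invariant Ruberman's argument needs. In short: your citation is fine, but your account of \emph{why} the citation works misidentifies the obstruction, and the misidentification is not harmless --- it is incompatible with Hillman's theorem, which the paper itself flags as ``a subtle point.''
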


While Stoltzfus obstructs topological double sliceness (i.e.\ obstructs a 2-knot from being a cross-section of a locally flat, topologically unknotted 3-sphere), Ruberman's theorem involves smooth topology. Ruberman gives an invariant that obstructs double sliceness which is shown to be well-defined using Rokhlin's theorem applied to a smooth, spin 4-manifold cobounded by a smooth 3-manifold in $B^5$. When applied directly, he thus obstructs the 5-twist spun trefoil from being smoothly doubly slice. By work of Wall \cite[Corollary 3.1]{wall} and Shaneson \cite[Theorem 2.1]{shaneson} (or more precisely a theorem of Wall that rested on a conjecture later proved by Shaneson), every smooth 3-sphere in $S^5$ that is topologically unknotted is also smoothly unknotted. Thus, we can rephrase Theorem \ref{ruberman} in a seemingly sharper way: if $L$ is a smooth 3-sphere in $S^5$ admitting the 5-twist spun trefoil as a cross-section (via a smooth splitting of $S^5$), then $L$ is not topologically unknotted.

This is a subtle point -- Hillman \cite{hillmansolvable} showed that the 5-twist spun trefoil is a cross-section of a locally flat unknotted 3-sphere, i.e. is topologically doubly slice. We conclude that such a 3-sphere cannot be smoothed without changing its intersection with the 4-sphere.

We focus on Ruberman's obstruction rather than Stoltzfus's because it is easier for us to give an explicit example of a 2-knot to which Ruberman's proof applies. This is important because we will use another property of this particular 2-knot which we discuss in the next section (see Proposition \ref{satoh}).

\section{Constructing slice 3--balls}

By Kervaire \cite{kervaire}, we know that every 2--knot is slice. In this section we give a procedure for constructing a 3--ball in $B^5$ bounded by a specific 2--knot in $\partial B^5$.

\begin{definition}
Let $\Sigma$ be an oriented genus--$g$ surface in an orientable 4-manifold $X$. Let $\eta$ be an arc in $X$ with endpoints on $\Sigma$ that is disjoint from $\Sigma$ in its interior and is not tangent to $\Sigma$ near its boundary.  Let $h$ be a 3-dimensional 1-handle with core arc $\eta$ and feet on $\Sigma$ with the property that surgering $\Sigma$ along $H$ yields an orientable genus-$(g+1)$ surface $\Sigma_{\eta}$. By Boyle \cite{boyle}, the handle $h$ is determined by $\eta$ up to smooth isotopy in a neighborhood of $\eta$. 

We say that $\Sigma_\eta$ is obtained from $\Sigma$ by {\emph{attaching a tube along $\eta$}}.
\end{definition}

The following lemma of Hosokawa--Kawauchi \cite{hosokawa} is very well known (and has been proved in much greater generality by Baykur--Sunukjian \cite{baykur2016knotted}).

\begin{lemma}[\cite{hosokawa}]
\label{obs:stabilization}
Let $K$ be a smooth 2-sphere in $S^4$. For some $n$, there exists a collection of $n$ arcs $\eta_1,\ldots, \eta_n$ such that attaching smooth tubes to $K$ along $\eta_1,\ldots,\eta_n$ yields a smoothly unknotted genus--$n$ surface.
\end{lemma}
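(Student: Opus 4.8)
The plan is to prove Lemma~\ref{obs:stabilization} by leveraging the fact that every smooth $2$--knot is slice (Kervaire \cite{kervaire}), and that a slicing $3$--ball in $B^5$ can be built from finitely many handles attached to a collar of $K$. First I would fix a smooth properly embedded $3$--ball $B\subset B^5$ with $\partial B = K\subset S^4=\partial B^5$, which exists by Kervaire. Choosing a self-indexing Morse function on $B^5$ restricting to a Morse function on $B$ (or equivalently giving $B$ a handle decomposition relative to a collar $K\times[0,\epsilon]$ of its boundary), I can assume $B$ is built from $K\times I$ by attaching some $1$--handles, $2$--handles, and $3$--handles. Turning this handle decomposition upside down and absorbing the $3$--handles, I may arrange that $B$ is obtained from a collar of $K$ by attaching only $1$-- and $2$--handles; after handle trading I will in fact try to arrange that there are equal numbers $n$ of each. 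The core idea is then that the ``halfway'' level set of such a slice disk is exactly $K$ with $n$ tubes attached (the $1$--handles of $B$ recording the tubing arcs $\eta_1,\dots,\eta_n$), and this halfway surface bounds on the other side a $3$--ball built from it by attaching $n$ compressing $2$--handles — hence is an unknotted genus--$n$ surface.

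More concretely, the key steps in order would be: (1) From the slice disk $B$ for $K$, extract a handle decomposition of $B$ rel.\ boundary collar with handles only of index $1$ and $2$. (2) Split $B$ along the level surface $\Sigma$ sitting just above all the $1$--handles and just below all the $2$--handles; identify the part of $B$ below $\Sigma$ as $(K\times I)$ with $3$--dimensional $1$--handles attached along arcs $\eta_1,\dots,\eta_n$ pushed into $B^5$, so that $\Sigma$ is the surface obtained from $K$ by attaching tubes along these arcs. Here I would invoke the Boyle uniqueness statement (cited in the definition above) to know the tubes are well-defined by the arcs. (3) Identify the part of $B$ above $\Sigma$ as $\Sigma\times I$ with $2$--handles attached; since the union is a $3$--ball, this exhibits $\Sigma$ as a surface that, after compressing along $n$ disjoint disks, becomes a $3$--ball $B^3$ in $S^4$ — equivalently, $\Sigma=\partial(\text{handlebody})$ pushed off from this slicing ball, so $\Sigma$ is smoothly unknotted of genus $n$. (4) Finally, push the arcs $\eta_i$ (originally pushed slightly into $B^5$) back out to lie in $S^4$; since $1$--handles in a $4$--manifold are determined by their core arcs up to isotopy, this does not change the resulting surface up to isotopy, so the tubes can be taken to be attached in $S^4$ itself.

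The main obstacle I anticipate is step~(3): reorganizing the handle decomposition so that the "top half" of the slice disk really is just $2$--handles attached to $\Sigma$, yielding the conclusion that $\Sigma$ is unknotted of exactly genus $n$. A naive handle decomposition of $B$ rel.\ collar will have $1$--, $2$--, and $3$--handles with no control on their numbers; I need to cancel or trade handles to eliminate the $3$--handles and balance the count of $1$-- and $2$--handles. Turning $B$ upside down swaps $1$-- and $2$--handles with $3$-- and $2$--handles, and a $3$--handle attached to a $3$--manifold-with-boundary that results in a manifold with one more boundary sphere must cancel with a $2$--handle (or be absorbed into the collar), so I expect to be able to arrange there are no $3$--handles at the cost of possibly increasing $n$; then an Euler characteristic / connectivity count forces the numbers of $1$-- and $2$--handles to agree. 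A secondary technical point is keeping everything smooth and embedded throughout — in particular making sure the level surface $\Sigma$ is smoothly embedded in $S^4$ after isotoping the slicing $3$--ball so that its Morse function's middle level lands in $\partial B^5$ — but this is routine once the handle-theoretic picture is set up. I would also remark that the same argument runs in the locally flat topological category verbatim, though we only need the smooth statement here.
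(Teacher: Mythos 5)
Your approach departs from the paper's in a way that introduces a genuine gap. The paper does not use a slice $3$--ball in $B^5$ at all: it uses a \emph{Seifert $3$--manifold} $Y$ for $K$ \emph{inside} $S^4$. Taking a relative handle decomposition of $Y$ rel $K$, the cores of the $1$--handles give the tubing arcs $\eta_i$, and then $K_{\eta_1,\dots,\eta_n}$ visibly bounds the complementary piece of $Y$ (the union of the $2$-- and $3$--handles), which is a genus-$n$ handlebody embedded \emph{in $S^4$}. That is exactly the definition of unknotted, and the proof is finished in four lines. By starting instead from a slice ball $B\subset B^5$, your mid-level surface $\Sigma$ bounds a handlebody (the $2$-- and $3$--handle piece of $B$) that lives in the sub-$5$--ball $h^{-1}([0,t])$, not in the level $S^4=h^{-1}(t)$. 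Concluding from this that $\Sigma$ is unknotted is precisely the content of Lemma~\ref{lem:boundaryparallel} later in the paper (a handlebody in $B^5$ whose radial Morse function has a single index--$0$ point and no index--$2$ or $3$ points is boundary-parallel), and your phrase ``$\Sigma=\partial(\text{handlebody})$ pushed off from this slicing ball'' skips exactly the nontrivial step that lemma supplies. Without invoking it (or reproving it), step~(3) does not establish unknottedness; you would be proving the conclusion of the lemma that is supposed to feed into the later argument by implicitly assuming a result that only appears afterwards.

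There is also a smaller arithmetic error: you cannot arrange for $B$ to be built from a collar of $K$ by $1$-- and $2$--handles alone with $n$ of each. Since $\chi(B)-\chi(K)=1-2=-1$, any relative handle decomposition satisfies $-\#1+\#2-\#3=-1$; with no $3$--handles one would need $\#1=\#2+1$, never $\#1=\#2$. What is actually true is that one can take exactly one $3$--handle, in which case $\#1=\#2$; the ``top'' piece $\Sigma\times I\cup(\text{$2$--handles})\cup(\text{$3$--handle})$ is then a genus-$n$ handlebody, but (as above) it is a handlebody in $B^5$, not in $S^4$. If you want to salvage this route, you should explicitly cite (or prove) the boundary-parallelism statement of Lemma~\ref{lem:boundaryparallel} — there is no circularity, since its proof is independent of Lemma~\ref{obs:stabilization} — but it is markedly cleaner and more self-contained to run the argument with a Seifert hypersurface in $S^4$ as the paper does.
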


\begin{proof}
Let $Y$ be an oriented 3-manifold smoothly embedded in $S^4$ with boundary $K$. Fix a relative handle decomposition on $Y$. Let $\eta_1,\ldots,\eta_n$ be cores of the 1--handles of this decomposition. Then $K_{\eta_1,\ldots,\eta_n}$ bounds a copy of $Y$ with the relative 1--handles deleted, which is a smooth handlebody. We conclude that $K_{\eta_1,\ldots,\eta_n}$ is smoothly unknotted.
\end{proof}

Satoh \cite{satoh} gave examples of when the tubings prescribed by Lemma \ref{obs:stabilization} are particularly simple.

\begin{proposition}[\cite{satoh}]\label{satoh}
Let $K$ be a $k$--twist spun trefoil for some $k$. Then a single tube can be attached to $K$ to obtain a smoothly unknotted torus.
\end{proposition}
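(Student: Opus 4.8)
The plan is to realize the $k$-twist spun trefoil as the boundary of an explicit immersed-then-resolved slice-like $3$-manifold whose handle structure has exactly one $1$-handle, then invoke the argument of Lemma~\ref{obs:stabilization}. Concretely, recall that the $k$-twist spin construction takes a knotted arc $\tau\subset B^3$ (here the trefoil arc) and spins $(B^3,\tau)$ around $\partial B^3$ while applying $k$ full twists, producing a $2$-knot $K\subset S^4$. The standard Seifert-surface picture for the trefoil is a once-punctured torus $\Sigma_0\subset B^3$ bounded by $\tau$; it has a handle decomposition with one $0$-handle and two $1$-handles (the two bands). Spinning $\Sigma_0$ (with the $k$ twists) sweeps out a $3$-manifold $Y\subset S^4$ with $\partial Y=K$. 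I would compute that this $Y$ deformation retracts onto the $2$-complex swept out by the spine of $\Sigma_0$, and that after the twist-spinning only \emph{one} of the two $1$-handles survives as an essential $1$-handle of $Y$, while the other becomes cancellable against a $2$-handle created by the spinning of the complementary arc. Thus $Y$ admits a relative handle decomposition with a single $1$-handle.

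Next I would apply the mechanism in the proof of Lemma~\ref{obs:stabilization}: if $\eta$ is the core of that unique $1$-handle of $Y$, then deleting the $1$-handle from $Y$ gives a $3$-manifold $Y'$ with $\partial Y' = K_\eta$, and $Y'$ is built from the $0$-handle with only $0$- and higher-index handles arranged so that it is a (smooth) handlebody—here $Y'$ is a solid torus, so $K_\eta$ is smoothly unknotted. Since surgering a $2$-sphere along one tube raises genus by one, $K_\eta$ is a torus, and it is unknotted because it bounds the solid torus $Y'$. This is exactly the assertion of the proposition.

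Alternatively—and this may be the cleaner route to write down—one can argue via the well-known fact that twist-spun knots are fibered: Zeeman's theorem gives that the $k$-twist spin of any knot $\kappa$ fibers over $S^1$ with fiber the punctured $k$-fold branched cover of $S^3$ over $\kappa$ and monodromy the branched deck transformation; for the trefoil this fiber is a once-punctured genus-$g_k$ surface with $g_k$ depending on $k$, but the point is only that the \emph{fiber Seifert solid} (the mapping cylinder structure) exhibits $K$ as bounding a $3$-manifold with a handle decomposition coming from the fiber's. I would then choose $\eta$ to be a single arc dual to a cut system reducing the monodromy to something with a solid-torus mapping torus, or more simply push the fiber surface's handle structure down to a single surviving $1$-handle using that the branched cover of $S^3$ over the trefoil is a Seifert-fibered (in fact, for small $k$, a lens-space-like or $S^1\times S^2$-summand-free) space whose relevant handlebody piece is a solid torus. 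Either way, the key input is identifying the correct single arc $\eta$.

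The main obstacle I anticipate is the bookkeeping in the first approach: verifying precisely that, after incorporating the $k$ twists, one of the two bands of the trefoil Seifert surface is swept into a cancelling $1$-handle/$2$-handle pair in $Y$, so that a single tube genuinely suffices for \emph{all} $k$ rather than a $k$-dependent number. Getting this uniformity—and pinning down the resulting handlebody as a solid torus rather than merely a handlebody of some genus—will require care with the twist-spinning isotopy, and is where I would expect Satoh's original argument to do real work (likely via an explicit motion-picture / banded-link presentation of $K$ and its tubing).
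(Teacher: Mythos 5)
Your proposal does not establish the proposition; both routes you sketch break down at the step that matters, and neither can be patched to give a \emph{single} tube in the case the paper actually uses. In the first route, the claim that ``after the twist-spinning only one of the two $1$--handles survives'' is precisely what needs to be proven, and you supply no argument for it; you acknowledge this at the end, but that means the proof is missing its main step. There is also a more basic issue: ``spinning $\Sigma_0$ with $k$ twists'' is not well-defined as stated, because the $k$-fold rotation of $B^3$ used in the twist-spin construction is required to fix the arc $\tau$ near its endpoints but does not fix a chosen Seifert surface $\Sigma_0$ for $\tau$, so the spun surface does not automatically sweep out an embedded $3$--manifold without a further (nontrivial) choice of compatible motion.

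The second route is not merely incomplete but quantitatively wrong. You describe the Zeeman fiber as ``a once-punctured genus-$g_k$ surface,'' but the fiber of the $k$-twist spun trefoil is a once-punctured $3$--\emph{manifold} $M_k^\circ$, the punctured $k$-fold cyclic branched cover of $S^3$ over the trefoil, i.e.\ the punctured Brieskorn sphere $\Sigma(2,3,k)$. Feeding $M_k^\circ$ into the mechanism of Lemma~\ref{obs:stabilization}, the number of tubes obtained is the number of relative $1$--handles in a handle decomposition of $M_k^\circ$ built on its $S^2$ boundary; filling in a $3$--ball and reading off the resulting Heegaard splitting shows this is at least the Heegaard genus of $M_k$. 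For $k=5$ (the case the paper needs), $M_5$ is the Poincar\'e homology sphere, whose Heegaard genus is $2$, so the fiber produces at least two tubes, not one, and your assertion that the branched cover has a solid-torus ``handlebody piece'' is false in exactly this case. Satoh's result does not go through the Zeeman fiber: one must instead exhibit a different, smaller Seifert $3$--manifold for $K$ in $S^4$ with a single relative $1$--handle (by the Euler-characteristic count this forces a once-punctured lens space), and the geometric input that makes this possible is the tunnel number one structure of the trefoil together with an explicit motion-picture description of the twist-spin and its unknotting tunnel -- substantive extra geometry not recoverable from the fibration alone. If you want to pursue this, abandon the fiber and aim to exhibit a punctured lens space bounded by $K$.
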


Before stating the main lemma of this section, we describe some useful work of Hirose on isotopies of unknotted surfaces that makes use of the Rokhlin quadratic form.

\begin{definition}
Let $\Sigma$ be a genus--$g$ surface in $S^4$. The {\emph{Rokhlin quadratic form}} on $\Sigma$ is a quadratic form $q:H_1(\Sigma;\mathbb{Z})\to\mathbb{Z}/2\mathbb{Z}$ defined as follows.

Given a primitive element $\alpha\in H_1(\Sigma;\mathbb{Z})$, let $C$ be a simple closed curve on $\Sigma$ representing $\alpha$. Let $P$ be a disk in $S^4$ bounded by $C$ that is {\emph{framed}}, i.e.\ so that the 1--dimensional subbundle of the normal bundle of $C$ that is tangent to $\Sigma$ extends over all of $P$. Then \[q(\alpha)=|\mathring{P}\cap \Sigma|\pmod{2}.\]
\end{definition}

For our purposes, a {\emph{symplectic basis}} $((A_1,B_1)$, $\ldots$, $(A_g,B_g))$ of a genus--$g$ surface $F$ consists of simple closed curves $A_1$, $\ldots$, $A_g$, $B_1$, $\ldots$, $B_g$ on $F$ such that the following are all true.
\begin{itemize}
    \item $[A_1],\ldots,[A_g]$ are linearly independent in $H_1(F;\mathbb{Z})$, 
    \item $[B_1],\ldots,[B_g]$ are linearly independent in $H_1(F;\mathbb{Z})$,
    \item $A_i\cap A_j=B_i\cap B_j=A_i\cap B_j=\emptyset$ for $i\neq j$,
    \item $A_i$ and $B_i$ intersect transversely in one point.
\end{itemize}

Hirose \cite{hirose2002diffeomorphisms} showed that the Rokhlin form determines equivalence of symplectic bases on unknotted surfaces in $S^4$.

\begin{theorem}[\cite{hirose2002diffeomorphisms}]
\label{thm:hirose}
Let $U_g$ be an unknotted surface of genus--$g$ in $S^4$. Fix two symplectic bases of curves $((A_1,B_1),\allowbreak\ldots,\allowbreak(A_g,B_g))$ and $((A'_1,B'_1),\allowbreak\ldots,\allowbreak(A'_g,B'_g))$ on $U_g$. Then there is an ambient isotopy of $S^4$ taking $U_g$ to itself and taking $A_i,B_i$ to $A'_i,B'_i$ for each $i$ if and only if $q([A_i])=q([A'_i])$ and $q([B_i])=q([B'_i])$ for each $i$.
\end{theorem}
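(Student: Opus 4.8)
The plan is to prove necessity directly from the definition of $q$, and to prove sufficiency by reducing, through a change of coordinates on $U_g$, to the assertion that every mapping class of $U_g$ preserving $q$ is induced by an ambient isotopy of $S^4$.

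\emph{Necessity.} The form $q$ depends only on the pair $(S^4,\Sigma)$ together with a homology class on $\Sigma$, so it is invariant under any ambient isotopy of $S^4$ carrying $U_g$ to itself. Concretely, if $\Psi_t$ is such an isotopy with $\Psi_0=\mathrm{id}$ and $\Psi_1(A_i)=A_i'$, and $P$ is a framed disk in $S^4$ bounded by $A_i$ with $q([A_i])=|\mathring{P}\cap U_g|\pmod{2}$, then $\Psi_1(P)$ is again a framed disk bounded by $A_i'$ --- the framing condition transports because $\Psi_1(U_g)=U_g$ --- and it has the same number of interior intersections with $U_g$. Since $q([A_i'])$ does not depend on the choice of framed disk, $q([A_i'])=q([A_i])$, and likewise for the $B_i$.

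\emph{Sufficiency.} By the change-of-coordinates principle for mapping class groups, any two systems of curves on $U_g$ satisfying the definition of a symplectic basis are carried one to the other by an orientation-preserving diffeomorphism (orientation-preserving may be arranged by first composing with a reflection fixing the initial system); fix such a $\phi\colon U_g\to U_g$ with $\phi(A_i)=A_i'$, $\phi(B_i)=B_i'$ for all $i$. Because $q$ is a quadratic refinement of the mod-$2$ intersection pairing, it is determined by its values on the basis $\{[A_i],[B_i]\}$ of $H_1(U_g;\mathbb{Z}/2)$; combining this with $\phi_*[A_i]=[A_i']$, $\phi_*[B_i]=[B_i']$ and the hypothesis shows $q\circ\phi_*=q$. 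It therefore suffices to prove that every orientation-preserving $\phi\in\mathrm{MCG}(U_g)$ with $q\circ\phi_*=q$ is induced by an ambient isotopy of $S^4$ preserving $U_g$ setwise: given such an isotopy, one then corrects $\phi$ up to isotopy on the surface by a further ambient isotopy supported in a tubular neighborhood of $U_g$, and obtains an ambient isotopy of $S^4$ taking $(U_g;A_i,B_i)$ to $(U_g;A_i',B_i')$.

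To establish this last reduction I would proceed as follows. Under the symplectic representation $\mathrm{MCG}(U_g)\to\mathrm{Sp}(2g,\mathbb{Z}/2)$, the stabilizer of $q$ is exactly the preimage of the orthogonal group $O(q)$; since $O(q)$ is generated by orthogonal transvections, which are the images of the Dehn twists $T_c$ along simple closed curves $c$ with $q([c])=1$, the stabilizer of $q$ is generated by such Dehn twists together with the kernel of the mod-$2$ symplectic representation (the latter kernel being generated by the Torelli group and squares of Dehn twists). It remains to realize each of these generators by an explicit ambient isotopy of $(S^4,U_g)$, and for this one fixes the standard model of the unknotted surface --- $U_g$ as the boundary of a standardly embedded $3$--dimensional handlebody in $S^4$, or as the unknotted $2$--sphere with $g$ trivial tubes attached --- and writes down the isotopies directly, realizing the transvection generators by spinning, sliding, or flipping a handle and the remaining generators by twisting along suitable disks in the complement of $U_g$. \textbf{The main obstacle} is precisely this geometric realization: it is where the unknottedness of $U_g$ is used essentially, since for a knotted surface strictly fewer mapping classes extend, and it requires both a generating set for the stabilizer of $q$ adapted to the standard picture of $U_g$ and a careful check that each chosen ambient isotopy induces the intended mapping class. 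Reconciling the algebra of $O(q)$ with the geometry of the standard unknotted surface is the real content of the theorem.
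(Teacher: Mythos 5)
This statement is not proved in the paper at all: it is quoted as a theorem of Hirose \cite{hirose2002diffeomorphisms} and used as a black box, so the only question is whether your outline would actually constitute a proof. Your necessity argument is fine, and your reformulation of sufficiency (every mapping class of $U_g$ preserving $q$ extends over $S^4$, plus a correction in a tubular neighborhood) is the right framing --- it is essentially what Hirose proves. But as you yourself flag, the geometric realization of a generating set of the stabilizer of $q$ is the entire content of the theorem, and the proposal stops exactly there; so at best this is a reduction, not a proof.

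More seriously, the algebraic step you do commit to is wrong in precisely the case this paper needs. You claim $O(q)$ is generated by the orthogonal transvections $T_v$ with $q(v)=1$, i.e.\ by images of Dehn twists along curves of Rokhlin value $1$. By Dieudonn\'e's classical exception, this fails for the hyperbolic (Arf $0$) form on $\mathbb{F}_2^4$: all transvections of $O^+_4(\mathbb{F}_2)\cong (S_3\times S_3)\rtimes \mathbb{Z}/2$ are conjugate, the abelianization is $(\mathbb{Z}/2)^2$, and hence the transvections generate only an index-$2$ subgroup. The unknotted genus-$2$ surface carries exactly this form (the standard basis has $q\equiv 0$), and genus $2$ is the case used in Theorem~\ref{mainthm}. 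Since ordered mod-$2$ symplectic bases with prescribed $q$-values form a principal homogeneous space under $O(q)$, a subgroup of the stabilizer whose image in $Sp(4,\mathbb{Z}/2)$ is proper cannot act transitively on such bases, so realizing your proposed generators (twists along $q=1$ curves together with the kernel of the mod-$2$ representation) would not even yield the genus-$2$ statement. A correct argument must either produce an extendable mapping class hitting the missing coset of $O^+_4(\mathbb{F}_2)$ or proceed, as Hirose does, with a generating set of the full stabilizer of $q$ adapted to the standard surface; without that, the reduction itself is broken, not merely incomplete.
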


\begin{lemma}
\label{lem:ball}
Let $h:B^5\to[0,1]$ be the radial function. If a 2--knot $K$ can be transformed into an unknotted surface $U_n$ by attaching $n$ tubes, then $K$ bounds a 3--ball $B$ in $B^5$ such that $h|_{B}$ is Morse with one index--0 point, $n$ index--1 points, and $n$ index--2 points.

\end{lemma}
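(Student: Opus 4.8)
The plan is to build the 3-ball $B$ by first reversing the tubing process to produce the unknotted surface $U_n$, then capping off $U_n$ inside $B^5$, and finally reinterpreting this movie radially to control the Morse function $h|_B$. Concretely, let $\eta_1,\ldots,\eta_n$ be the arcs along which tubes are attached to $K$ to produce $U_n$, and choose them to have disjoint supports. Pushing $K$ slightly into the interior of $B^5$ and working in a collar $S^4\times[1-\epsilon,1]$ of $\partial B^5 = S^4$, I would realize each tube attachment as a 1-handle attached to $K\times\{t\}$ as $t$ decreases. Because $h$ is the radial function, a 1-handle attached while moving inward toward the center of the ball contributes an index-1 critical point of $h|_B$ (the 1-handle slides the surface across a band, which is exactly a saddle for the radial height). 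After all $n$ 1-handles are attached, we are left with (a pushed-in copy of) $U_n$ sitting at some radius $r_0 \in (0,1)$.

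Next I would use the hypothesis that $U_n$ is unknotted: by definition $U_n$ bounds a smooth genus-$n$ handlebody $\mathcal{H}$ in $S^4$. Choosing a relative handle decomposition of $\mathcal{H}$ with one 0-handle and $n$ 1-handles (as in the proof of Lemma~\ref{obs:stabilization}), I can cap off $U_n$ in the inner ball $B^5_{r_0}$ of radius $r_0$ by first attaching, as the radius continues to decrease, $n$ more bands that compress $U_n$ down the cocores of those 1-handles — these contribute the $n$ index-1\ldots wait, rather: compressing a genus-$n$ surface along $n$ curves that together planarize it turns $U_n$ into an unknotted 2-sphere, and each such compression, read in the inward radial direction, is again a saddle, hence an index-$1$ critical point of $h|_B$; but we want index-$2$ points. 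The correct bookkeeping is to read the closing-off movie in the \emph{outward} direction from the center: start with a small round 2-sphere near the origin (one index-0 point of $h|_B$), attach $n$ 1-handles to build $U_n$ as the radius grows, and then attach $n$ more 1-handles — realized as saddles in the now-\emph{decreasing} outward-to-$\partial$ direction, i.e. index-2 points of $h|_B$ — to recover $K$. So the movie of cross-sections $B \cap S^4_r$ is: empty, then (index-0) a small unknot-2-sphere bounding the 0-handle of $\mathcal{H}$, then $n$ saddles (index-1) building $U_n = \partial\mathcal{H}$, then $n$ saddles (index-2) which are the reverses of the original tube-attachment bands taking $U_n$ back to $K$, ending at $K = B\cap \partial B^5$. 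The surface swept out is $B$; it is a smooth 3-ball since its cross-sections go empty $\to S^2 \to \Sigma_n \to S^2 = K$ through handle attachments, and an Euler-characteristic / connectivity count confirms $B \cong B^3$.

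The key technical points, none of them deep but each requiring care: (1) arranging all the relevant bands and handles to have disjoint, nested supports so the critical values of $h|_B$ can be taken distinct and $h|_B$ is genuinely Morse — this uses general position and that we can shrink handle supports freely; (2) verifying that attaching a 3-dimensional 1-handle to a cross-section, when the attachment is staged along the radial direction, really does produce exactly one nondegenerate critical point of index 1 or 2 depending on orientation — this is the standard correspondence between handle attachments in a movie and critical points of the height function, as in Morse-theoretic descriptions of surfaces in 4-manifolds; and (3) confirming the total surface is a 3-ball: since $\mathcal{H}$ with its $n$ 1-handles removed is a 3-ball (the argument in Lemma~\ref{obs:stabilization}), and we are gluing that 3-ball to the trace of the $n$ tube-attachments relating $U_n$ to $K$ (itself a product-like cobordism thickened), the union is $B^3$.

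I expect the main obstacle to be step (2)/(3) done carefully: precisely matching the combinatorics of ``attach a tube to a 2-knot'' and ``delete a 1-handle from a handlebody'' with the count of radial critical points and their indices, and making sure the resulting 3-manifold is a ball rather than, say, a punctured handlebody. In particular one must check that the $n$ index-1 points (building $U_n$ from $S^2$) and the $n$ index-2 points (collapsing $U_n$ to $K$) do not cancel or create extra homology — i.e. that no stray 3-handles or 0-handles are forced — which amounts to the observation that the handle decomposition of $B$ read off from $h|_B$ has exactly $1-2n+\ldots$ contributions summing to $\chi(B^3)=1$ and $B$ is simply connected. Once the movie is set up with disjoint supports, this is a finite check, but it is where all the bookkeeping lives.
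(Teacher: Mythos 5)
Your movie construction and index bookkeeping eventually land on the same skeleton as the paper's proof: read the radial slices from the center outward — birth of an unknotted sphere, $n$ saddles up to $U_n$, $n$ saddles down to $K$ — and this does produce one index-$0$, $n$ index-$1$, and $n$ index-$2$ critical points of $h|_B$. But there is a genuine gap at exactly the point you flag as ``where all the bookkeeping lives,'' and it is not just a finite check: it is the heart of the lemma.

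You propose to cap off $U_n$ with an \emph{arbitrary} handlebody $\mathcal{H}$ bounded by $U_n$, compressing along the cocores of its $1$-handles. But the resulting $3$-manifold is a ball only if those compression curves form a symplectic basis with the belt circles $A_1,\ldots,A_n$ of the tubes, so that the $n$ relative $1$-handles and $n$ relative $2$-handles of $B$ cancel. For a generic choice of $\mathcal{H}$ this fails. In the worst case the compression curves of $\mathcal{H}$ could coincide with (or lie in the same Lagrangian as) the $A_i$, in which case the $1$- and $2$-handles of $B$ are attached along the same curves and $B$ has nontrivial first homology despite $\chi(B)=1$. Your Euler-characteristic and ``simply connected'' observations do not follow from disjoint supports; they are the statement one must \emph{arrange}, not verify after the fact. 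What the paper does at this point is the substance of the proof: it observes that each $A_i$ bounds a framed disk (the cocore of the attached tube), so $q([A_i])=0$ for the Rokhlin quadratic form; it chooses $B_i$ dual to $A_i$ and modifies them by cut-and-paste so that $q([B_i])=0$ as well; and it then invokes Hirose's theorem (Theorem~\ref{thm:hirose}) to isotope $U_n$ so that $((A_1,B_1),\ldots,(A_n,B_n))$ becomes the standard symplectic basis on the standard unknotted surface, at which point the compressing disks $\Delta_i$ and the final $3$-ball $D$ are visible explicitly and the handles cancel geometrically because $|A_i\cap B_j|=\delta_{ij}$. Your proposal never confronts the framing constraint $q([B_i])=0$ and never uses Hirose's theorem (or any substitute for it), so it cannot guarantee that the capping-off disks sit in the right relative position to the $A_i$. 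Without that, the construction need not produce a ball.
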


\begin{proof}
Let $A_1,\ldots, A_n$ be belt circles of the tubes attached to $K$ to obtain $U_n$. Since each $A_i$ bounds a framed disk (the cocore of the 3--dimensional 1--handle $H_i$ used to perform the tube surgery) whose interior is disjoint from $U_n$, $q([A_i])=0$. Choose curves $B_1,\ldots, B_n$ on $U_n$ such that $((A_1,B_1),\allowbreak\ldots,\allowbreak (A_n,B_n))$ is a symplectic basis of $U_n$. 

If $q([B_i])=1$, then let $C_i$ be a curve obtained by cut-and-pasting $A_i$ and $B_i$, so $[C_i]=[A_i]+[B_i]$ and the curves $A_i, B_i, C_i$ pairwise intersect in a single point. Since $q$ is a quadratic form, we have $q([C_i])=q([A_i])+q([B_i])+|A_i\cap B_i|=0+1+1=0\in\mathbb{Z}/2\mathbb{Z}$. Then redefine $B_i:=C_i$; we thus arrange for $q([B_i])=0$ for all $i$.

By Theorem \ref{thm:hirose}, $U_n$ can be isotoped such that $((A_1,B_1),\ldots, (A_n,B_n))$ is taken to the standard symplectic basis (see Figure~\ref{fig:standard}), so we conclude that $B_1,\ldots, B_n$ bound disjoint framed disks $\Delta_1,\ldots,\Delta_n$ whose interiors are in the complement of $U_n$. Specifically, the disks $\Delta_1,\ldots,\Delta_n$ may be taken to lie in a copy of $S^3$ that contains $U_n$. These disks have the property that when $\Delta_i$ is thickened to $\Delta_i\times I$ (so that $(\partial\Delta_i)\times I$ is contained in $U_n$ and $\mathring{\Delta}_i\times I$ is disjoint from $U_n$), compressing $U_n$ along all of the $\Delta_i$ yields the unknotted sphere $U_0$, which bounds a 3-ball $D$. We can now describe $B$ via the following intersections. (Recall that $h^{-1}(1)=\partial B^5$ and that $h^{-1}(0)$ is the central point of $B^5$.)
\begin{figure}
    \centering
    \includegraphics[width=0.8\textwidth]{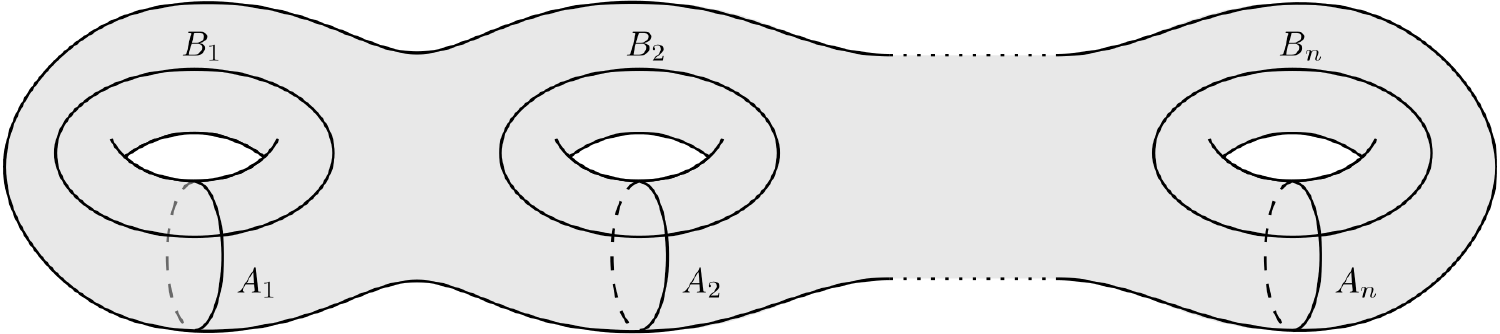}
    \caption{A symplectic basis $((A_1,B_1),\ldots, (A_n,B_n))$ on an unknotted genus--$n$ surface in $S^4$. We have shaded a genus--$n$ handlebody in which the $A_i$ curves bound disks; the closure of its complement in this 3-dimensional cross-section is a genus--$n$ handlebody in which the $B_i$ curves bound disks. Gluing these two handlebodies together yields an $S^3$ that splits $S^4$ into two smooth 4-balls.}
    \label{fig:standard}
\end{figure}
\begin{align*}
    B\cap h^{-1}(3/4,1]&=K\times(3/4,1],\\
    B\cap h^{-1}\{3/4\}&=K\cup \left(\bigcup_{i=1}^n H_i\right),\\
    B\cap h^{-1}(1/2,3/4)&=U_n\times(1/2,3/4),\\
    B\cap h^{-1}\{1/2\}&=U_n\cup\left(\bigcup_{i=1}^n (\Delta_i\times I)\right),\\
    B\cap h^{-1}(1/4,1/2)&=U_0\times(1/4,1/2),\\
    B\cap h^{-1}\{1/4\}&=D,\\
    B\cap h^{-1}[0,1/4)&=\emptyset.
\end{align*}

In words, $B$ is built from $\partial B=K\times\{1\}$ by the following steps (in order).
\begin{enumerate}[label=\arabic*.]
    \item Thicken $K$.
    \item Attach $n$ 3-dimensional 1-handles whose belts are $A_1,\ldots, A_n$.
    \item Attach $n$ 3-dimensional 2-handles along curves $B_1,\ldots, B_n$ that are chosen so that $|A_i\cap B_j|=\delta_{ij}$.
    \item Attach a 3-dimensional 3-handle to the boundary component which is not $K$.
\end{enumerate}

Because $|A_i\cap B_j|=\delta_{ij}$, the 1- and 2-handles in this decomposition of $B$ can be canceled, and hence $B$ is a 3-ball.%

After a small perturbation of $B$, $h|_B$ is Morse with one index--0 critical point (in $h^{-1}(1/4)$), $n$ index--1 critical points (in $h^{-1}(1/2)$) and $n$ index--2 critical points (in $h^{-1}(3/4)$).
\end{proof}

\section{Proof of Theorem~\ref{mainthm}}

The ability to position the handlebody $H$ in $B^5$ so that $h|_{H}$ has only one index-0 point will be particularly useful.

\begin{lemma}
\label{lem:boundaryparallel}
Let $H$ be a genus--$g$ handlebody smoothly and properly embedded in $B^5$, and let $h:B^5\to[0,1]$ be the radial function.  Assume that the function $h|_H$ is Morse with a single index--0 critical point and with no index--2 or 3 critical points.  Then there is a smooth isotopy of $H$ rel.\ boundary taking $H$ into $\partial B^5$.
\end{lemma}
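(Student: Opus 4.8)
The plan is to analyze the Morse function $h|_H$ handle-by-handle, building up $H$ from its single index-0 critical point by successively attaching index-1 handles, and arguing that at every stage the sublevel set can be isotoped rel.\ its "outer" boundary into a collar of $\partial B^5$. First I would observe that since $h|_H$ has exactly one index-0 critical point and no index-2 or index-3 critical points, and since $H$ is connected with $\partial H$ nonempty, a standard count (the index-0 point contributes a $0$-handle, each index-1 point a $1$-handle, and nothing else) shows $H$ is built from a $0$-handle and $g$ one-handles. Near $h^{-1}(0)$, the index-0 point means $H\cap h^{-1}[0,\epsilon)$ is a small $3$-ball near a point $p$ of $B^5$; a $3$-ball properly embedded in a small $5$-ball near $p$ with boundary on the $4$-sphere $h^{-1}(\epsilon)$ is unknotted (topologically by Schoenflies-type reasoning and smoothly since it is a local picture), so after an isotopy rel.\ $\partial B^5$ we may assume this initial $3$-ball lies in $h^{-1}(\epsilon)$.

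The heart of the argument is an inductive "pushing up" step. Suppose we have already isotoped $H$ (rel.\ $\partial B^5$) so that everything below some regular level $h^{-1}(t)$, except for a collar, lies inside the level set $h^{-1}(t)$; equivalently, $H\cap h^{-1}[0,t]$ is (after the isotopy) a product collar of a $3$-manifold-with-boundary $Y_t\subset h^{-1}(t)$. As we pass the next index-1 critical point, $H\cap h^{-1}[0,t']$ is obtained from $Y_t\times[t,t']$ by attaching a $3$-dimensional $1$-handle along a framed $0$-sphere in $h^{-1}(t')$, with the attaching arc $\eta$ (the cocore, or rather the core ascending from the critical point) living in $h^{-1}[t,t']$. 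The key point is that $h^{-1}[t,t']$ is diffeomorphic to $h^{-1}(t')\times[t,t']$, so this attaching region is a $1$-handle attached along a framed $0$-sphere together with a tube running through the product region; since any arc in a product $N\times I$ with endpoints on $N\times\{1\}$ can be isotoped rel.\ endpoints into $N\times\{1\}$, we can push the $1$-handle up into the level $h^{-1}(t')$, extending the isotopy by the identity outside a neighborhood of $h^{-1}[t,t']$ and in particular keeping $\partial B^5$ fixed. This re-establishes the inductive hypothesis at level $t'$. Iterating through all $g$ index-1 critical points, and finally pushing the top collar $h^{-1}[t_{\max},1]$ trivially, deposits all of $H$ into $\partial B^5$.

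I expect the main obstacle to be making the "push the handle up one level" step genuinely rel.\ boundary and genuinely ambient: one must check that the isotopy of the core arc $\eta$ into the top level of the cobordism $h^{-1}[t,t']$ can be realized by an ambient isotopy of $B^5$ supported in a neighborhood of that cobordism and extended by the identity, so that it fixes $\partial B^5=h^{-1}(1)$ pointwise and is compatible across the finitely many critical levels. This is where one uses that the $1$-handle is determined by its core up to isotopy near the core (the cited result of Boyle), so that tracking the core arc suffices to track the handle. A secondary technical point is the base case: identifying $H\cap h^{-1}[0,\epsilon)$ as an unknotted properly embedded $3$-ball in a small $5$-ball and isotoping it into the boundary sphere rel.\ nothing (there is no boundary constraint here since $\partial H\subset h^{-1}(1)$ is untouched at this stage). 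Both points are routine Morse-theoretic bookkeeping, but care is needed to ensure the cumulative isotopy is well-defined and fixes $\partial B^5$ throughout; I would organize the write-up so that each step produces an ambient isotopy of $B^5$ that is the identity outside $h^{-1}[t-\delta, t'+\delta]$ and the composition of all these clearly fixes $h^{-1}(1)$.
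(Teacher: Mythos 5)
Your approach is in the same spirit as the paper's: use the Morse-theoretic decomposition (one $0$-handle, $g$ $1$-handles) to collapse $H$ into a single level set of $h$, then use the absence of higher-index critical points to run a trivial product isotopy above that level. The paper does this in one step (pushing the $0$-handle's $3$-ball up and all $g$ tubes down to a common level $h^{-1}(t_0)$), whereas you proceed one $1$-handle at a time; that difference is cosmetic.

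The genuine gap is in your ``push the $1$-handle up'' step, and it is not the rel-boundary/ambient issue you flag. When you push the core arc $\eta$ into the level set $h^{-1}(t')\cong S^4$, the resulting arc must have interior disjoint from the portion of $H$ that is \emph{already} sitting in (or just below) $h^{-1}(t')$ --- call it $Y$, a $3$-manifold-with-boundary. An arc is $1$-dimensional and $Y$ is $3$-dimensional inside a $4$-sphere, so generic position gives \emph{transverse point intersections}, not disjointness; the intersections cannot be removed by perturbation alone. Your cited fact (arcs in a product $N\times I$ with endpoints on the top can be pushed into the top) is true but irrelevant to this, since it says nothing about avoiding $Y\subset N\times\{1\}$; also, as stated, the endpoints of $\eta$ lie on the \emph{bottom} of the cobordism, not the top. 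This is precisely the step the paper does not treat as routine: it observes that the level $2$-sphere $S=h|_H^{-1}(t_0)$ just above the $0$-handle is \emph{unknotted} (it bounds the small descending/ascending $3$-ball $W$ from the unique index-$0$ point), so $\pi_1(S^4\setminus S)\cong\mathbb{Z}$, and therefore any arc with endpoints on $S$ and interior off $S$ can be isotoped to a standard one avoiding the $3$-ball $W'\subset S^4$ bounded by $S$. Without some replacement for this argument --- and in an inductive version one must also verify the analogous $\pi_1$ statement holds for the genus-$k$ surface $\partial Y$ at each intermediate stage --- the core of the $1$-handle cannot be guaranteed to land in the complement of $Y$, and the inductive step fails. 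Calling this ``routine Morse-theoretic bookkeeping'' misidentifies where the actual topological content of the lemma lives.
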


\begin{proof}
After choosing a gradient-like flow for $h|_H$, $h$ induces a handlebody decomposition of $H$ with one 0-handle and $g$ $1$-handles. Let $t_0$ and $t_1$ be chosen so that $0<t_0<t_1<1$, with the index--0 critical point of $h|_H$ lying below $h^{-1}(t_0)$, and the $g$ index--1 critical points sitting between $h^{-1}(t_0)$ and $h^{-1}(t_1)$.  Then in $h^{-1}(t_0) \cong S^4$ the level set $S:=h|_H^{-1}(t_0)$ is an unknotted 2--sphere, which bounds a properly embedded 3--ball $W=h|_H^{-1}[0,t_0]$ in $h^{-1}([0,t_0])$. Let $W'$ be the image of $W$ after an isotopy rel.\ boundary to $h^{-1}(t_0)$, so $W'$ is a 3-ball in $h^{-1}(t_0)$ bounded by $S$.

As $t$ increases from $t_0$ to $t_1$, the cross-sections $h|_H^{-1}(t)$ of $H$ change by attaching tubes along some arcs $\eta_1, \ldots , \eta_g$. Push these tubes down to $h^{-1}(t_0)$, so that $h|_H^{-1}(t_0)$ consists of the union of the unknotted 2--sphere $S$ along with $g$ 3--dimensional 1--handles $b_1,\ldots, b_g$ attached to $S$ along each $\eta_1,\ldots,\eta_g$ respectively.  For small $\varepsilon > 0$, nearby level sets $h|_H^{-1}(t_0-\varepsilon)$ now consist of only (a parallel copy of) the sphere $S$, while $h|_H^{-1}(t_0+\varepsilon)$ is a genus--$g$ surface parallel to one obtained from adding tubes to $S$ along the arcs $\eta_i$.  

Because $\pi_1(S^4\setminus S)\cong\mathbb{Z}$, any two arcs based at a pair of  points in $S$ and with interiors disjoint from $S$ are homotopic and hence isotopic in $S^4\setminus S$. This allows us to isotope $H$ so that the arcs $\eta_i$ (and hence the 3-dimensional 1-handles $b_i$) avoid the ball $W'$ in $h^{-1}(t_0)$.

Now we can isotope $W$ to $W'\subset h^{-1}(t_0)$, so $M:=h_H^{-1}(t_0)$ is the genus-$g$ handlebody $W'\cup b_1\cup\cdots\cup b_g$. If we push the interior of $M$ slightly below $h^{-1}(t_0)$, the function $h|_H$ has no critical values in $[t_0,1]$.  The level sets $h|_H^{-1}(t)$ for $t_0 \leq t \leq 1$ trace out an isotopy of a genus-$g$ surface $F$ in $S^4$ that can be extended to an ambient isotopy $f_t$ ($t_0\le t\le 1$) of $S^4$, with $f_{t_0}=\id$. If we parametrize $h^{-1}([t_0,1]) \cong \partial B^4 \times [t_0,1]$, then 
\[
H=\{(f_{t_0}(M),t_0)\}\cup \{(f_t(F),t)\, | \, t_0\le t\le 1\},
\] 
which is isotopic rel.\ boundary to $\{(f_{1}(M),t_0)\}\cup \{(f_1(F),t)\, | \, t_0\le t\le 1\}$.  This can in turn be pushed into $\partial B^5$.
\end{proof} 

We are now ready to prove Theorem \ref{mainthm}.

\begin{proof}[Proof of Theorem \ref{mainthm}]

First we prove the theorem for $g=2$. We will then extend this strategy to larger $g$. 
Let $K$ be the 5-twist spun trefoil. By Theorem \ref{ruberman}, $K$ is not smoothly doubly slice. By Proposition~\ref{satoh} and Lemma~\ref{lem:ball}, there is a smoothly embedded 3-ball $B$ in $B^5$ whose boundary is $K$ and such that the radial function on $B^5$ restricts to a Morse function on $B$ with one index--0 point, one index--1 point, and one index--2 point.

Double $B$ along $K$ to obtain a smooth 3--sphere $L$ in $S^5$. (That is, $(S^5,L)=(B^5,B)\cup\overline{(B^5,B)}$. We will write $B$ and $\overline{B}$ to denote the corresponding halves of $L$.)  By replacing the radial function $h$ on $\overline{(B^5,B)}$ with $2-h$, and gluing to the radial function on $(B^5,B)$, we obtain a function $S^5 \rightarrow [0,2]$ which (by abuse of notation) we continue to denote by $h$.  This new function restricts to a Morse function $h|_L$ on $L$ with the following critical points, in order from highest to lowest (descending in the table, naturally).
\begin{center}
\begin{tabular}{clp{.1in}l}(vi)&index--3&&from $\overline{B}$; the dual of the index--0 point of $B$,\\(v)&index--2&&from $\overline{B}$; the dual of the index--1 point of $B$,\\(iv)&index--1&&from $\overline{B}$; the dual of the index--2 point of $B$,\\(iii)&index--2&&from $B$,\\(ii)&index--1&&from $B$,\\(i)&index--0&&from $B$.\end{tabular}
\end{center}

Note that the critical points of $h|_L$ are not in order. However, we may interchange the heights (with respect to $h$) of the (iv) index--1 point and the (iii) index--2 point by smoothly isotoping $L$, so that both of the index--1 points of $h|_L$ are below both the index--2 points.  After this isotopy, fix a level $S^4\cong h^{-1}(t_0)$ between the index--1 and index--2 critical points of $h|_L$ separating $S^5$ into two 5-balls $V_1:=h^{-1}[0,t_0]$ and $V_2:=h^{-1}[t_0,2]$.  This $S^4$ intersects $L$ in a smooth genus--2 unknotted surface $U=h|_L^{-1}(t_0)$. We have $L=H_1\cup_{U}H_2$ for two smooth genus--2 handlebodies $H_1$ and $H_2$, with $H_1\subset V_1$ lying below $U$ and $H_2\subset V_2$ lying above $U$. Note that $h|_{H_1}$ and $-h|_{H_2}$ each have one index-0 point and two index-1 points. By Lemma \ref{lem:boundaryparallel}, $H_1$ is smoothly boundary-parallel in $V_1$ and $H_2$ is smoothly boundary-parallel in $V_2$.

Since $(H_1,H_2)$ is a Heegaard splitting of $S^3$, by Waldhausen's theorem (\cite{waldhausen}; see \cite{schleimer} for exposition) there exists a symplectic basis $((A_1,B_1),\allowbreak(A_2,B_2))$ of $U$ such that each $A_i$ bounds a disk in $H_1$ and each $B_i$ bounds a disk in $H_2$. Since $H_1,H_2$ are each boundary parallel, we may isotope such a disk bounded by $A_i$ or $B_i$ from $B^5$ to $S^4$ to obtain a framed disk with boundary on $U$ and interior disjoint from $U$. We conclude $q([A_i])=q([B_i])=0$.

By Theorem \ref{thm:hirose}, there is a diffeomorphism of $S^4$ taking $(U;(A_1,B_1),(A_2,B_2))$ to the standard unknotted surface with standard curves as in Figure \ref{fig:standard} (drawn for general genus). Then $U$ bounds smooth handlebodies $H_1^*$ and $H_2^*$ in $S^4$ with $A_i$ bounding a disk in $H_1^*$ and $B_i$ bounding a disk in $H_2^*$, and with $H_1^*\cup_{U}H_2^*$ an unknotted 3-sphere. Push $H_1^*$ into $V_1$ and $H_2^*$ into $V_2$. Since $L=H_1\cup_U H_2$ is topologically knotted, $L$ is not topologically isotopic to $H_1^*\cup_UH_2^*$, which is unknotted. Therefore, if $H_1$ is topologically isotopic rel.\ boundary in $V_1\cong B^5$ to $H_1^*$, then $H_2$ is not topologically isotopic rel.\ boundary in $V_2\cong B^5$ to $H_2^*$. This completes the proof for $g=2$, with the pair of non-isotopic handlebodies being either $(H_1,H_1^*)$ or $(H_2,H_2^*)$.

To extend the above argument to larger $g$, we simply perturb the 3-ball $B$. Fix $g>2$ and let $\widehat{B}$ be obtained from $B$ by perturbing $B$ with respect to $h$ to introduce $(g-2)$ pairs of cancelling index-1, index-2 pairs to $h|_{\widehat{B}}$. Now consider the 3-sphere $\widehat{L}=\widehat{B}\cup\overline{B}$ in $S^5$. Again, $K$ is a cross-section of $\widehat{L}$, so $\widehat{L}$ is not topologically unknotted. (And more directly, $\widehat{L}$ is smoothly isotopic to $L$ so of course $\widehat{L}$ is not topologically unknotted.) In words, we obtain $\widehat{L}$ by gluing a copy of $\widehat{B}$ to a copy of $B$ (with opposite orientations); note that $\widehat{L}$ is not expressed as a double. As constructed, the radial function on $B^5$ restricts to a Morse function on $\widehat{L}$ with the following critical points.

\begin{center}
\begin{tabular}{clp{.1in}l}(vii)&index--3&&from $\overline{B}$; the dual of the index--0 point of $B$,\\(vi)&index--2&&from $\overline{B}$; the dual of the index--1 point of $B$,\\(v)&index--1&&from $\overline{B}$; the dual of the index--2 point of $B$,\\(iv$_{2(n-2)}$)&index--2&&\multirow{5}{*}{$\Bigg\}$From the perturbations that yield $\widehat{B}$ from $B$,}\\(iv$_{2(n-2)-1}$)&index-1&&\\$\vdots$&\multicolumn{1}{c}{$\vdots$}&&\\(iv$_{2}$)&index--2&&\\(iv$_{1}$)&index-1&&\\(iii)&index--2&&from $B$,\\(ii)&index--1&&from $B$,\\(i)&index--0&&from $B$.\end{tabular}
\end{center}

In total, $h|_{\widehat{L}}$ has one index--0 point, $g$ index--1 points, $g$ index--2 points, and one index--3 point.  
Smoothly isotope $\widehat{L}$ to move the index-1 critical points below $h^{-1}(t_0)\cong S^4$ and the index-2 critical points above $h^{-1}(t_0)$. Then $h^{-1}(t_0)$ intersects $\widehat{L}$ in a genus--$g$ surface $\widehat{U}$, and $\widehat{H}_1=h|_L^{-1}[0,t_0]$ and $H_2=h|_L^{-1}[t_0,1]$ are smooth genus--$g$ handlebodies that are smoothly boundary parallel (via Lemma \ref{lem:boundaryparallel}) in the 5-balls $V_1,V_2$ respectively. By the same argument as in the $g=2$ case (recall Figure \ref{fig:standard}), $\widehat{U}$ bounds smooth boundary-parallel handlebodies $\widehat{H}_1^*$ and $\widehat{H}_2^*$ respectively in $V_1,V_2$ such that $\widehat{H}_i$ and $\widehat{H}_i^*$ are compressing-curve equivalent but $\widehat{H}_1^*\cup_{\widehat{U}}\widehat{H}_2^*$ is unknotted. We similarly conclude that if $\widehat{H}_1$ is topologically isotopic rel.\ boundary to $\widehat{H}_1^*$, then $\widehat{H}_2$ is not topologically isotopic rel.\ boundary to $\widehat{H}_2^*$. Then either $(\widehat{H}_1,\widehat{H}_1^*)$ or $(\widehat{H}_2,\widehat{H}_2^*)$ are the desired pair of non-isotopic genus-$g$ handlebodies. 
\end{proof}

\begin{remark}\label{fixh1remark}
While not strictly necessary in the proof of Theorem \ref{mainthm}, we can modify the argument slightly so that the non-isotopic pair of handlebodies is specified (rather than being indeterminately one of $(H_1,H_1^*)$ or $(H_2,H_2^*)$).

To accomplish this, return to the genus-2 case and recall that $H_1\cup H_2$ is the knotted 3-sphere $L\subset S^5$, with $H_i\subset V_i\cong B^5$ so that $L$ intersects $S^4=\partial V_i$ in an unknotted genus-2 surface $U$. Let $((A_1,B_1),(A_2,B_2))$ be a symplectic basis of $U$ with each $A_j$ bounding a disk in $H_1$ and each $B_j$ bounding a disk in $H_2$. Push $H_1,H_2$ into $S^4$. Perform smooth isotopy of $S^4$ (extended to all of $S^5$) that takes $H_1$ to a handlebody in a smooth equatorial $S^3$ of $S^4$, and let $H_3:=\overline{S^3\setminus H_1}$. Set $H_1^*:=H_1$. Note that this isotopy need not fix $U$, and will take $H_2$ to some potentially complicated handlebody in $S^4$ with the same boundary as $H_1$. If $H_1,H_2$ are pushed back into $V_1,V_2$ respectively, their union is a 3-sphere isotopic to $L$, so still not topologically isotopic to the unknotted 3-sphere.

If $H_2,H_3$ are compressing curve equivalent, then we are done: set $H_2^*:=H_3$ and push the interior of each $H_i$ and $H_i^*$ slightly into $V_i$. Since $H_1^*\cup H_2^*$ is an unknotted $S^3$ and $H_1$,$H_1^*$ are isotopic rel.\ boundary in $V_1$, the handlebodies $H_2,H_2^*$ are not topologically isotopic rel.\ boundary in the 5-ball $V_2$.

In general, we cannot expect for $H_2,H_3$ to be compressing curve equivalent. Let $C_1,C_2$ be curves on $U$ bounding disks in $H_3$ so that $((A_1,C_1),(A_2,C_2))$ are a symplectic basis for $U$ (again using Waldhausen's theorem). Take the intersection points $A_i\cap B_i$ and $A_i\cap C_i$ to agree for each $i$, and let $\phi:U\to U$ be a surface automorphism with $\phi(B_j)=C_j$ for each $j$ and that fixes each $A_i$ pointwise. Then $\phi$ restricts to an boundary-fixing automorphism of the planar surface $F:=U\setminus\nu(A_1\cup A_2)$. Let $\gamma_1,\gamma_2$ be separating curves on $F$ as in Figure \ref{fig:abccurves}. Then $\pi_0($Aut$(F))$ is generated by Dehn twists about the four boundary components of $F$ and the curves $\gamma_1,\gamma_2$. In particular, this means that up to isotopy, $\phi(\gamma_1)$ is obtained from $\gamma_1$ by a sequence of Dehn twists about $\gamma_1$ and $\gamma_2$.

\begin{figure}
    \centering
    \labellist
    \pinlabel{\textcolor{red}{$A_1$}} at 47 -7
    \pinlabel{\textcolor{red}{$A_2$}} at 108 -7
    \pinlabel{\textcolor{red}{$\gamma_1$}} at 78 -7
    \pinlabel{\textcolor{red}{$\gamma_2$}} at 78 32
    \pinlabel{\textcolor{blue}{$B_1$}} at 7 25
    \pinlabel{\textcolor{blue}{$B_2$}} at 145 58
    \pinlabel{\textcolor{darkgreen}{$C_1$}} at 23 55
    \pinlabel{\textcolor{darkgreen}{$C_2$}} at 130 20
    \endlabellist
    \includegraphics[width=0.5\textwidth]{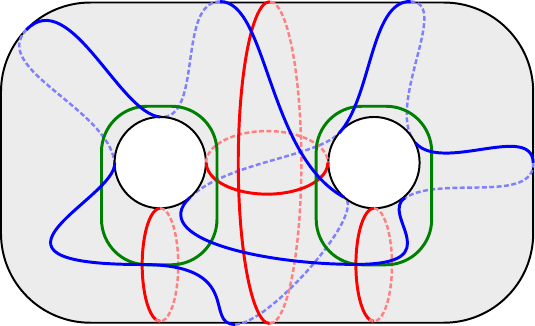}
    \vspace{.1in}
    \caption{The surface $U$, on which $((A_1,B_1),(A_2,B_2))$ is a symplectic basis, as is $((A_1,C_1),(A_2,C_2))$. For each $i$, the curve $A_i$ bounds a disk into $H_1$, the curve $B_i$ bounds a disk into $H_2$, and $C_i$ bounds a disk into $H_3$ (see Remark \ref{fixh1remark}). We include curves $\gamma_1,\gamma_2$. There is an automorphism $\phi$ of $U$, fixing $A_1$ and $A_2$ pointwise, that takes $B_i$ to $C_i$. Up to isotopy rel.\ boundary in the complement $F:=U\setminus\nu(A_1\sqcup A_2)$, the map $\phi$ is a product of Dehn twists about $\gamma_1$, $\gamma_2$, and curves parallel to components of $\partial F$. Here we draw a general situation, but in Remark \ref{fixh1remark} we show how to perform an isotopy of $S^5$ before choosing $C_1,C_2$ so that $C_i=B_i$ for each $i$, and thus $H_2$ and $H_3$ are compressing curve equivalent.}
    \label{fig:abccurves}
\end{figure}

\begin{figure}
    \centering
    \labellist
    \pinlabel{\textcolor{red}{$A_1$}} at 3 52
    \pinlabel{\textcolor{red}{$A_2$}} at 56 52
    \pinlabel{\textcolor{red}{$\gamma_2$}} at 15 100
    \pinlabel{\textcolor{red}{$A_i$}} at 3 -5
    \endlabellist
    \includegraphics[width=0.8\textwidth]{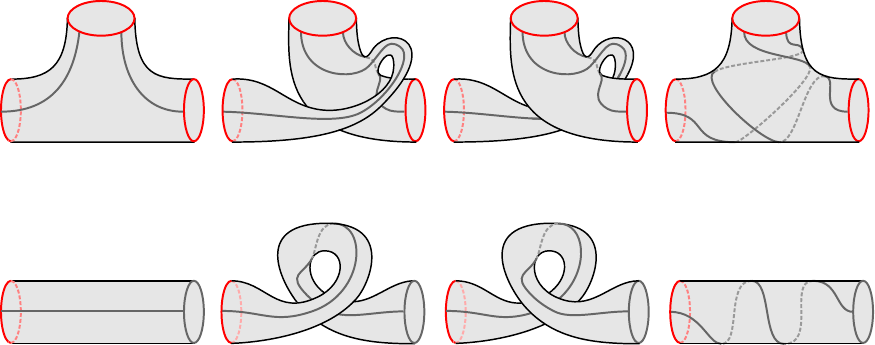}
    \vspace{.1in}
    \caption{Each row depicts an isotopy of $S^4$ taking $H_1$ to $H_1$ setwise, as in Remark \ref{fixh1remark}. In the top row, from left to right the induced automorphism on $U$ is isotopic to a product of a right-handed Dehn twists $A_1$ and $A_2$ and a left-handed Dehn twist about $\gamma_2$. In the bottom row, from left to right the induced automorphism of $U$ is isotopic to two right-handed Dehn twists about $A_i$. The handedness of all relevant Dehn twists can be reversed by reversing the illustrated isotopy.}
    \label{fig:h1isotopies}
\end{figure}

Note that $\gamma_1$ is separating in $U$. Then we may perform smooth isotopy of $S^4$ (extended to $S^5$) taking $H_1$ to itself (setwise) so that the induced automorphism on $U$ is a Dehn twist (of either sign) about $\gamma_1$. In the top row of Figure \ref{fig:h1isotopies}, we show how to perform another smooth isotopy of $S^4$ (extended to $S^5$) taking $H_1$ to itself so that the induced automorphism on $U$ is a composition of a Dehn twist about $\gamma_2$ (of either sign) and Dehn twists about $A_1$ and $A_2$ of the opposite sign. Thus, by performing a sequence of these isotopies before choosing $H_3$, we may assume that $\phi(\gamma_1)=\gamma_1$.

Now we have arranged so that $C_i=\phi(B_i)$ is obtained from $B_i$ by Dehn twists about $A_i$. Since $q([A_i])=q([B_i])=q([C_i])=0$, $C_i$ is obtained from $B_i$ by an even number of Dehn twists about $A_i$ for each $i$. In the bottom row of Figure \ref{fig:h1isotopies}, we show another smooth isotopy of $S^4$ (extended to $S^5$) taking $H_1$ to itself so that the induced automorphism on $U$ is given by two Dehn twists about $A_i$ (of either sign). By performing some number of these isotopies (again before choosing $H_3$) we may take $C_i=B_i$, so $H_2$ and $H_3$ are compressing curve equivalent. Then set $H_2^*:=H_3$ and push the interiors of both $H_2,H_2^*$ slightly into $V_2$. The smooth handlebodies $H_2,H_2^*$ are not topologically isotopic rel.\ boundary in $V_2\cong B^5$.

So far, we have only considered the genus-2 case. As in the proof of Theorem \ref{mainthm}, if we simultaneously add $g-2$ solid tubes to $H_2,H_2^*$, the resulting smooth genus-$g$ handlebodies are also not topologically isotopic rel.\ boundary in $V_2\cong B^5$.

\end{remark}

\bibliographystyle{plain}
\bibliography{biblio.bib}

\end{document}